\newtheorem{thm}{Theorem}[section]
\newtheorem{prop}[thm]{Proposition}
\newtheorem{cor}[thm]{Corollary}
\newtheorem{defn}[thm]{Definition}
\newtheorem{rem}[thm]{Remark}
\numberwithin{equation}{section}
\newtheorem{example}[thm]{Example}
\DeclareMathOperator{\Aut}{Aut}
\DeclareRobustCommand{\Div}{\mathrel{\Div@}}
\DeclareRobustCommand{\nDiv}{\mathrel{\nDiv@}}
\newcommand{\Div@}{%
  \mkern2mu\nonscript\mkern-2mu % some space in subscripts
  \mathpalette\Div@@\relax
  \mkern2mu\nonscript\mkern-2mu
}
\newcommand{\Div@@}[2]{%
  \hbox{%
    \sbox\z@{$#1T$}%
    \vbox to \ht\z@{%
      \offinterlineskip\m@th
      \hbox{$#1.$}\vfil
      \hbox{$#1.$}\vfil
      \hbox{$#1.$}%
    }%
  }%
}
\newcommand{\nDiv@}{\centernot\Div@}
\begin{document}
\title{Almost abelian numbers}
%% If there is more than one author, put \cauthor immediately before
%% the corresponding author.
%\cauthor %% mark the next author as corresponding author
\author{Iulia C\u at\u alina Ple\c sca\footnote{
Faculty of Mathematics of "Al. I. Cuza" University of Ia\c si, Romania, 
e-mail: dankemath@yahoo.com
ORCID: 0000-0001-7140-844X}\text{ } and Marius T\u arn\u auceanu\footnote{Faculty of Mathematics of
"Al. I. Cuza" University of Ia\c si, Romania,
e-mail: tarnauc@uaic.ro}}

%% List the authors, initials and surnames only, for the
%% running head (left hand page)
\maketitle

\begin{abstract}
In this article we introduce the concept of almost $\mathcal{P}$-numbers. We survey the existing results in literature for almost cyclic numbers and give characterizations for almost abelian and almost nilpotent numbers proving these two are equivalent.
\end{abstract}

%% - subject classification and keywords
%% 2010 American Mathematical Society Subject Classification
%% Provide only ONE primary classification

\textbf{Classification}: primary: 20D05, secondary: 20D40

%% Four or five keywords or phrases
\textbf{Keywords}: finite groups, nilpotent groups, abelian groups

\section{Introduction}
Throughout this article, let $\mathcal{P}$ be a class of groups. We will denote the dihedral group of order $n$ by $D_n$ and the cyclic group of order $n$ by $C_n$. For more standard notations and definitions see \cite{Isaacs}. 
\begin{defn}\label{def:P}
A positive integer $n$ is called a \textbf{$\mathcal{P}$-number} if all groups of order $n$ are in class $\mathcal{P}$.
\end{defn}
Some of the most obvious particular cases for this definition (cyclic, abelian, nilpotent) use the concept of nilpotent factorization. 
\begin{defn}\cite{new}
A positive integer $n=p_{1}^{n_{1}} \cdots p_{k}^{n_{k}}, p_{i}$ distinct primes, is said to have \textbf{nilpotent factorization} if $p_{i}^{l} \not \equiv 1 \bmod p_{j}$ for all positive integers $i, j$ and $l$ with $1 \leq l \leq n_{i}$. 
\end{defn}
We have the following characterizations.
\begin{prop}\cite{new}
A positive integer $n$ is 
\begin{itemize}
\item
a nilpotent number if and only if it has nilpotent factorization.
\item
an abelian number if and only if it is a cube–free number with nilpotent factorization.
\item
a cyclic number if and only if it is a square–free
number with nilpotent factorization
\end{itemize} 
\end{prop}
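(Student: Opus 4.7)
The plan is to establish the nilpotent characterization first and then derive the abelian and cyclic cases from it, exploiting the fact that every finite nilpotent group is the internal direct product of its Sylow subgroups. Once the nilpotent equivalence is in hand, the other two reduce to identifying which prime powers $p_i^{n_i}$ admit only abelian (resp.\ cyclic) groups.

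For the direction ``nilpotent number $\Rightarrow$ nilpotent factorization'' I would argue by contrapositive. If $p_j$ divides $p_i^l-1$ for some $1\le l\le n_i$, then $\mathbb{F}_{p_i^l}^{\,*}$ contains an element of order $p_j$, so $GL_l(\mathbb{F}_{p_i})\subseteq\Aut((C_{p_i})^l)$ has an element of order $p_j$, yielding a nontrivial semidirect product $H=(C_{p_i})^l\rtimes C_{p_j}$. Taking the direct product of $H$ with cyclic groups of orders $p_i^{n_i-l}$, $p_j^{n_j-1}$, and $p_r^{n_r}$ for the remaining primes produces a group of order $n$ whose Sylow $p_j$-subgroup is not normal, hence is not nilpotent.

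The converse is the technical core. Let $G$ have order $n=p_1^{n_1}\cdots p_k^{n_k}$. I would induct on $n$ and prove that every Sylow subgroup of $G$ is normal. For each prime $p\mid n$, Sylow's theorem gives $n_p\equiv 1\pmod p$ and $n_p\mid n/p^{n_p}$. The hypothesis immediately rules out all \emph{prime-power} values $n_p>1$; composite values can occur in principle (for instance $5\cdot 17\equiv 1\pmod 3$), so a bare counting argument is not enough. The strategy is to first identify a prime $q$ (typically the largest, where the bound $n/q^{n_q}$ is small enough) whose Sylow count is forced to $1$ by the hypothesis, extract the normal Sylow $q$-subgroup $Q$, apply Schur--Zassenhaus to split $G\cong Q\rtimes H$ (using $\gcd(|Q|,|H|)=1$), verify that $|H|$ inherits nilpotent factorization so the induction hypothesis makes $H$ nilpotent, and then show the conjugation action of $H$ on $Q$ is trivial because the pointwise condition forces $\gcd(|H|,|\Aut(Q)|)=1$. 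This final coprimality step, together with confirming that composite Sylow counts can always be eliminated at some prime to start the induction, is where I expect the main obstacle: the pointwise hypothesis must be shown to be strong enough in every configuration, and keeping track of the induction at the level of divisors of $n$ rather than primes in isolation is the delicate part.

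With the nilpotent characterization in hand the other two statements follow mechanically: a direct product of $p$-groups is abelian (cyclic) iff every factor is. All groups of order $p$ and $p^2$ are abelian, while the Heisenberg group of order $p^3$ is not, so the abelian case adds cube-freeness; all groups of prime order are cyclic but $C_p\times C_p$ is not, so the cyclic case adds square-freeness.
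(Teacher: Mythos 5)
First, note that the paper does not prove this proposition at all: it is quoted from Pakianathan--Shankar \cite{new}, so the comparison below is against the standard argument in that reference rather than against anything in this paper.

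Your forward direction (nilpotent number $\Rightarrow$ nilpotent factorization, by contrapositive) is correct and is the standard construction, and the reductions of the abelian and cyclic cases to the nilpotent case via the Sylow decomposition are also fine (modulo the cosmetic point that for $p=2$ the nonabelian group of order $8$ is $D_8$ or $Q_8$ rather than a ``Heisenberg group''). The genuine gap is in the converse, which you yourself flag: the claim that a normal Sylow subgroup can always be located by Sylow counting ``typically at the largest prime'' is false, and the fallback claim that ``composite Sylow counts can always be eliminated at some prime'' is exactly the hard content and is left unproved. Concretely, $n=5\cdot 7\cdot 17=595$ has nilpotent factorization ($5,7\not\equiv 1 \bmod 17$, $5,17\not\equiv 1\bmod 7$, $7,17\not\equiv 1\bmod 5$), yet Sylow's congruences allow $n_{17}=35$ and $n_7=85$, so counting alone does not hand you a normal Sylow subgroup for the largest prime, and nothing you have written shows that some prime must work for a general $n$ with nilpotent factorization. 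Even granting a normal Sylow $q$-subgroup $Q$, your final coprimality step needs the nontrivial fact that every prime divisor of $|\Aut(Q)|$ other than $q$ divides $\prod_{l\le n_q}(q^l-1)$ for an \emph{arbitrary} $q$-group $Q$ of order $q^{n_q}$, not just an elementary abelian one; this is true but requires the theorem on $|\Aut(Q)|$ in terms of the Frattini quotient, which you do not invoke.

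The proof in \cite{new} avoids this entirely: one takes a minimal counterexample $G$ (non-nilpotent of minimal order among orders with nilpotent factorization, using that every divisor of such an $n$ again has nilpotent factorization), so that all proper subgroups of $G$ are nilpotent; by the Schmidt--Iwasawa theorem such a minimal non-nilpotent group has order $p^aq^b$ and the form $P\rtimes Q$ with $P$ a normal Sylow $p$-subgroup and $Q$ cyclic acting nontrivially, which forces $q\mid p^l-1$ for some $l\le a\le n_p$ and contradicts the factorization hypothesis. (An alternative repair of your route is Burnside's normal $p$-complement theorem applied to the smallest prime, which is in fact how the present paper argues in its own Theorem 2.1.) As it stands, your proposal does not contain a proof of the direction ``nilpotent factorization $\Rightarrow$ nilpotent number''.
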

The characterization of cyclic numbers can also be given as follows:
\begin{prop}\cite{new}
A positive integer $n$ is cyclic if and only if the number and its Euler totient function $\varphi(n)$ are coprime, that is
\[\gcd(n,\varphi(n))=1.\]
\end{prop}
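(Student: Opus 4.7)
The plan is to use the previous proposition, which characterizes cyclic numbers as square-free numbers with nilpotent factorization, and then show that the condition $\gcd(n,\varphi(n))=1$ is equivalent to this conjunction of properties.

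For the forward direction, I would assume $n$ is cyclic. By the previous proposition, $n$ is square-free, so write $n = p_1 p_2 \cdots p_k$ with distinct primes $p_i$, which gives
\[
\varphi(n) = \prod_{i=1}^{k}(p_i - 1).
\]
Since all exponents equal $1$, the nilpotent factorization condition collapses to $p_i \not\equiv 1 \pmod{p_j}$ for all $i,j$, i.e.\ $p_j \nmid (p_i - 1)$ for all $i,j$. Hence no prime divisor of $n$ divides $\varphi(n)$, so $\gcd(n,\varphi(n))=1$.

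For the reverse direction, assume $\gcd(n,\varphi(n))=1$. First I would argue that $n$ is necessarily square-free: if some prime $p$ satisfies $p^2 \mid n$, then writing $n = p^a m$ with $a \geq 2$ and $\gcd(p,m)=1$ gives $\varphi(n) = p^{a-1}(p-1)\varphi(m)$, so $p \mid \varphi(n)$, contradicting coprimality with $n$. Therefore $n = p_1 \cdots p_k$ is square-free, and now the same computation of $\varphi(n) = \prod (p_i-1)$ shows that $\gcd(n,\varphi(n))=1$ forces $p_j \not\equiv 1 \pmod{p_i}$ for all $i,j$, which is precisely the nilpotent factorization condition (the exponents $n_i$ being $1$). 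Invoking the previous proposition once more, $n$ is cyclic.

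The argument is essentially a bookkeeping exercise translating the gcd condition into conditions on prime divisors, so there is no serious obstacle. The only step requiring a genuine (if small) argument is the square-freeness deduction in the reverse direction; everything else is direct substitution into the characterization already established.
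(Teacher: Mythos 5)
Your argument is correct and complete: both directions reduce cleanly to the preceding characterization of cyclic numbers as square-free numbers with nilpotent factorization, and the only nontrivial step (that $\gcd(n,\varphi(n))=1$ forces $n$ to be square-free, via $p^{a-1}\mid\varphi(n)$ when $p^a\| n$ with $a\ge 2$) is handled properly. Note that the paper itself states this proposition without proof, merely citing Pakianathan--Shankar, so there is no in-paper argument to compare against; your derivation from the previous proposition is the natural one and matches the standard treatment in that reference.
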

\begin{example}
All prime numbers are cyclic numbers.
\end{example}
If we loosen the hypothesis in Definition \ref{def:P}, we reach the following definition.
\begin{defn}
A positive integer $n$ is called an \textbf{almost $\mathcal{P}$-number} if all but one group of order $n$ (up to isomorphism) are in class $\mathcal{P}$.
\end{defn}
This definition has started from the following result given in \cite{Master}:
\begin{thm}
Let $G$ be a group of order $n = p_1^{n_1}\dots p_k^{n_k}$, where $p_1<p_2<\dots<p_k$. Then there are exactly two groups (up to isomorphism) of order $n$ if and only if $k\geq 2$ and one of the following scenarios occurs:
\begin{equation}\label{sit1}
\begin{aligned}
&\text{for all } l \in\{1,\dots,k\}:  n_l = 1 \text{, and }\\&\text{there exists a unique pair } (i,j)\in\{1,\dots,k\}^2\text{  such that }p_i|p_j-1
\end{aligned}
\end{equation}
or
\begin{equation}\label{sit2}
\begin{aligned}
&\text{there exists a unique } j\in\{1,\dots,k\} \text{ such that }n_j = 2, \text{ and }\\
&\text{for all } l\in\{1,\dots,k\}\setminus \{j\}: n_l=1, \text{ and }\\
&\text{there exists a unique } i\in \{1,\dots,k\}\setminus \{j\}:p_i\mid p_j-1, \ p_i\nmid p_j + 1, \text{ and }\\
&\text{for all } (\alpha, \beta)\in\{1,\dots,k\}^2 \setminus \{(i,j)\}: p_\alpha \nmid p_\beta - 1 
\end{aligned}
\end{equation}
\end{thm}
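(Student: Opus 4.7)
The plan is to count isomorphism classes of groups of order $n$ by Sylow analysis, using the nilpotent/abelian/cyclic-number characterizations already stated together with standard semidirect-product arguments. Write $f(m)$ for the number of isomorphism classes of groups of order $m$.

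First, I would sharply restrict the admissible exponent shape. From $f(p)=1$, $f(p^2)=2$, $f(p^3)=5$ and the fact that forming direct products with a cyclic group of complementary coprime order gives $f(n)\geq f(p_l^{n_l})$ for each $l$, the hypothesis $f(n)=2$ already forces every $n_l\leq 2$. If two distinct exponents $n_j,n_l$ both equal $2$, the same direct-product construction yields $f(n)\geq f(p_j^2)f(p_l^2)=4$. Hence at most one exponent is $2$, the rest equal $1$, and together with $k\geq 2$ this leaves exactly two shapes to analyse: (i) $n$ squarefree, and (ii) $n=p_j^{2}\cdot m$ with $m$ squarefree coprime to $p_j$.

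In case (i), I would invoke the classical metacyclic description of groups of squarefree order. Absence of any pair $(p_i,p_j)$ with $p_i\mid p_j-1$ forces $\gcd(n,\varphi(n))=1$, so $n$ is a cyclic number and $f(n)=1$. Each such divisibility pair contributes a non-cyclic extension via $C_{p_j}\rtimes C_{p_i}$, and an $\Aut$-orbit count on $\mathrm{Hom}(C_{p_i},\Aut(C_{p_j}))$, combined with the product structure across coprime pieces, shows that exactly one such pair produces $f(n)=2$. This matches Scenario~\eqref{sit1}.

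In case (ii), let $j$ be the unique index with $n_j=2$. The Sylow $p_j$-subgroup is abelian, isomorphic to $C_{p_j^{2}}$ or $C_{p_j}\times C_{p_j}$, while every other Sylow subgroup is cyclic of prime order. I would use Sylow's theorems to force normality of the $p_j$-Sylow subgroup under the stated divisibility hypotheses, realise $G$ as a semidirect product, and enumerate the admissible $C_{p_i}$-actions on the $p_j$-Sylow part. The condition $p_i\mid p_j-1$ yields an order-$p_i$ automorphism of $C_{p_j}$; the condition $p_i\nmid p_j+1$ (which in the presence of $p_i\mid p_j-1$ forces $p_i$ odd) eliminates the irreducible two-dimensional representation of $C_{p_i}$ on $C_{p_j}\times C_{p_j}$; and the global condition excluding other pairs $(\alpha,\beta)$ with $p_\alpha\mid p_\beta-1$ prevents any further semidirect decompositions. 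Exhibiting $C_n$ together with the unique nontrivial construction then gives precisely $f(n)=2$, matching Scenario~\eqref{sit2}.

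The principal obstacle is the fine bookkeeping in case (ii): one must enumerate $\Aut$-equivalence classes of homomorphisms $C_{p_i}\to\Aut(C_{p_j^{2}})\cong C_{p_j(p_j-1)}$ and $C_{p_i}\to\Aut(C_{p_j}\times C_{p_j})\cong \mathrm{GL}_2(p_j)$, keep track of which of the two Sylow $p_j$-types is in play, and verify that the listed divisibility hypotheses collapse everything into a single non-cyclic companion of $C_n$. This representation-theoretic count is the delicate heart of the argument.
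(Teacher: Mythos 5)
The paper offers no proof of this theorem; it is imported verbatim from \cite{Master}, so there is no internal argument to compare yours against, and I have to judge the proposal on its own terms. Your exponent reduction and your treatment of the squarefree case via the metacyclic classification and an orbit count on $\mathrm{Hom}(C_{p_i},\Aut(C_{p_j}))$ (essentially H\"older's enumeration) are sound, and scenario \eqref{sit1} would come out correctly. But two things go wrong. Minor first: your own listed fact $f(p^2)=2$ is a counterexample to the ``only if'' direction as stated ($k=1$ there), so you cannot invoke ``together with $k\geq 2$'' when arguing from $f(n)=2$; that clause is a hypothesis only in the ``if'' direction, and the discrepancy should at least be flagged rather than silently absorbed.

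The fatal problem is case (ii), exactly the ``delicate representation-theoretic count'' you defer. Under the hypotheses of \eqref{sit2} the count is not $2$ and cannot be made to be $2$. If $n_j=2$ there are already two abelian groups of order $n$ (Sylow $p_j$-subgroup $C_{p_j^2}$ versus $C_{p_j}\times C_{p_j}$), so a single nonabelian group already forces $f(n)\geq 3$; and the condition $p_i\mid p_j-1$ produces at least two non-isomorphic nonabelian groups of order $p_ip_j^2$, namely $C_{p_j^2}\rtimes C_{p_i}$ (using $\Aut(C_{p_j^2})\cong C_{p_j(p_j-1)}$) and $(C_{p_j}\rtimes C_{p_i})\times C_{p_j}$, whence $f(n)\geq 4$. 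Concretely, $n=147=3\cdot 7^2$ satisfies every clause of \eqref{sit2} and has six groups. Indeed, the paper's own proof of Theorem \ref{theorem} exhibits precisely this pair of nonabelian groups to show that $p_i\mid p_j-1$ is incompatible with $n_j=2$ even for the weaker almost-abelian property, which is why \eqref{sit2'} replaces the condition by $p_i\mid p_j+1$. So the ``unique nontrivial companion of $C_n$'' your sketch promises to exhibit does not exist: the divisibility hypotheses do not ``collapse everything into a single non-cyclic companion,'' and the bookkeeping you postpone is exactly where the argument collapses. Any honest completion of your plan would instead reveal that the second scenario, as printed, must be corrected (to: a unique $n_j=2$, all other exponents $1$, and no divisibility relations $p_\alpha\mid p_\beta-1$ or $p_\alpha\mid p_j+1$ at all, together with the separate case $n=p^2$) before the equivalence can be proved.
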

The result above is equivalent to the following:
\begin{cor}\label{cor}
A positive integer $n=p_1^{n_1}\dots p_n^{n_k}$ is almost cyclic if $k\geq 2$ and either \eqref{sit1} or \eqref{sit2} hold. 
\end{cor}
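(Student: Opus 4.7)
The plan is to deduce the corollary directly from the preceding theorem. The key elementary observation is that for every positive integer $n$ there exists exactly one cyclic group of order $n$ up to isomorphism, namely $C_n$. Consequently, the statement ``all but one group of order $n$ is cyclic'' is equivalent to the statement ``there are exactly two isomorphism classes of groups of order $n$, one of which is $C_n$''.

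With this reformulation in hand, the corollary becomes essentially a translation of the theorem. First I would invoke the theorem, which gives that under the hypothesis $k \geq 2$ together with \eqref{sit1} or \eqref{sit2}, there are exactly two isomorphism classes of groups of order $n$. Since $C_n$ is always one of these classes, the second class must consist of a non-cyclic group; hence exactly one group of order $n$ fails to be cyclic, which is precisely the definition of an almost cyclic number.

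The only thing resembling an obstacle is checking that $C_n$ is indeed counted among the two groups produced by the theorem, but this is automatic: $C_n$ exists for every $n$, and in both scenarios \eqref{sit1} and \eqref{sit2} the prime factorization is compatible with a cyclic group of that order. In fact, because the theorem is biconditional, the same argument simultaneously establishes the converse, so the ``if'' in the corollary may be upgraded to an ``if and only if'' without any further work.
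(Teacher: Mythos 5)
Your argument is correct and is exactly the reasoning the paper leaves implicit: since $C_n$ is the unique cyclic group of order $n$ up to isomorphism, being almost cyclic is the same as having exactly two isomorphism classes of groups of order $n$, so the corollary (indeed as an equivalence, as the paper asserts) is an immediate reformulation of the preceding theorem. No gaps.
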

In what follows, we introduce similar results to Corollary \ref{cor} for abelian groups and nilpotent groups. 

\section{Main results}

\begin{thm}\label{theorem}
A positive integer $n=p_1^{n_1}\dots p_n^{n_k}$ is almost abelian if and only if either \eqref{sit1} or
\begin{equation}\label{sit2'}
\begin{aligned}
&\text{there exists a unique } j\in\{1,\dots,k\} \text{ such that }n_j = 2, \text{ and }\\
&\text{for all } l\in\{1,\dots,k\}\setminus \{j\}: n_l=1, \text{ and }\\
&\text{there exists a unique } i\in \{1,\dots,k\}\setminus \{j\}:p_i\mid p_j+1, \text{ and }\\
&\text{for all } (\alpha, \beta)\in\{1,\dots,k\}^2 \setminus \{(i,j)\}: p_\alpha \nmid p_\beta - 1 
\end{aligned}
\end{equation}
\end{thm}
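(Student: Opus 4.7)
The plan is to establish both implications via Sylow theory, using that $n$ is almost abelian exactly when there is one non-abelian group of order $n$. For sufficiency under \eqref{sit1}, Corollary~\ref{cor} yields exactly two groups of order $n$; since $n$ is squarefree, every abelian group of order $n$ is cyclic, so one of these two groups is $C_n$ and the other is non-cyclic and therefore non-abelian, giving exactly one non-abelian group.

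For sufficiency under \eqref{sit2'}, the hypothesis $p_\alpha \nmid p_\beta - 1$ for $(\alpha,\beta) \neq (i,j)$ ensures that each Sylow $p_l$-subgroup with $l \neq j$ is normal in any group of order $n$, and that the only possible non-trivial action among Sylow subgroups is of $C_{p_i}$ on the $p_j$-Sylow. Since $\Aut(C_{p_j^2}) \cong C_{p_j(p_j-1)}$ contains an element of order $p_i$ only if $p_i \mid p_j - 1$, the cyclic Sylow $C_{p_j^2}$ supports no non-trivial action and thus yields only the abelian group $C_n$. When the $p_j$-Sylow is $C_{p_j} \times C_{p_j}$, the hypothesis $p_i \mid p_j + 1$ (together with $p_i \nmid p_j - 1$) places the $p_i$-th roots of unity in $\mathbb{F}_{p_j^2} \setminus \mathbb{F}_{p_j}$, so every element of order $p_i$ in $GL_2(\mathbb{F}_{p_j})$ lies in a non-split torus, forming a single conjugacy class of cyclic subgroups of order $p_i$. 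This yields a unique non-abelian semidirect product, and hence exactly three groups of order $n$, two abelian and one non-abelian, so $n$ is almost abelian.

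For necessity, assume $n$ is almost abelian. Any exponent $n_l \geq 3$ forces at least two non-abelian groups (for instance $C_{p_l^2} \rtimes C_{p_l}$ and a Heisenberg-type construction both contribute), and two distinct exponents equal to $2$ likewise yield several non-abelian semidirect products, so $n$ must be cube-free with at most one $n_j = 2$. If $n$ is squarefree, almost abelian coincides with almost cyclic and Corollary~\ref{cor} forces \eqref{sit1}. Otherwise there is exactly one $n_j = 2$, and the uniqueness of the non-abelian group compels a unique pair $(i,j)$ supporting a non-trivial action, which must come from a non-split torus in $GL_2(\mathbb{F}_{p_j})$; otherwise the central, split-torus, and reducible conjugacy classes would each contribute a distinct non-abelian isomorphism type, contradicting uniqueness. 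This leaves precisely the conditions of \eqref{sit2'}. The hardest step throughout is the class-counting inside $GL_2(\mathbb{F}_{p_j})$: confirming that the non-split torus yields exactly one conjugacy class of order-$p_i$ cyclic subgroups while the split case yields several inequivalent semidirect products, as this count underlies both the uniqueness claim in the sufficient direction and the exclusion of the $p_i \mid p_j - 1$ scenario in the necessary direction.
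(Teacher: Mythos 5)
Your strategy is reasonable and parts of it genuinely diverge from the paper: you dispatch the \eqref{sit1} case by invoking the classification of orders admitting exactly two groups (squarefreeness then makes abelian equivalent to cyclic), and you propose to classify the non-abelian group in the \eqref{sit2'} case by a conjugacy analysis of order-$p_i$ subgroups of $GL_2(\mathbb{F}_{p_j})$, whereas the paper runs an induction on $n$ powered by Burnside's normal $p$-complement theorem applied to the smallest prime. However, there are two genuine gaps.

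First, in the sufficiency direction for \eqref{sit2'} you assert that the arithmetic hypotheses force every Sylow $p_l$-subgroup with $l\neq j$ to be normal and that the only possible non-trivial action is that of $C_{p_i}$ on the Sylow $p_j$-subgroup, and you read off the isomorphism type from this. As stated the normality claim is false: in $((C_{p_j})^2\rtimes C_{p_i})\times C_{n/(p_ip_j^2)}$ the Sylow $p_i$-subgroup is not normal. Even the corrected claim --- that an arbitrary group $G$ of order $n$ must decompose as a direct product of a group of order $p_ip_j^2$ with a cyclic group --- is exactly the structural statement that needs proof; it does not follow formally from the absence of other divisibility relations. This is what the paper's induction supplies: $n_1=1$ for the smallest prime, Burnside gives $G=H\rtimes C_{p_1}$ with $|H|=n/p_1$, the inductive hypothesis identifies $H$, and computations of automorphism group orders kill the remaining semidirect products. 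Your $GL_2(\mathbb{F}_{p_j})$ count (all order-$p_i$ subgroups lie in non-split tori, hence are conjugate, hence yield a single semidirect product) is correct and is a fine substitute for the paper's appeal to $|\Aut((C_{p_j})^2\rtimes C_{p_i})|$, but it only classifies the extensions once the reduction to $(C_{p_j})^2\rtimes C_{p_i}$ has been established.

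Second, in the necessity direction you pass from ``exactly one exponent equals $2$'' to ``a unique pair $(i,j)$ supporting a non-trivial action'' using the same letter $j$ for the squared prime and for the target of the action, without excluding the configurations in which the squared prime is the acting prime $p_i$ or some third prime $p_r$. Both must be ruled out, and the paper does so with explicit pairs of non-isomorphic non-abelian groups, namely $C_{p_j}\rtimes C_{p_i^2}$ versus $(C_{p_j}\rtimes C_{p_i})\times C_{p_i}$ when $n_i=2$, and $(C_{p_j}\rtimes C_{p_i})\times C_{p_r^2}$ versus $(C_{p_j}\rtimes C_{p_i})\times (C_{p_r})^2$ when $n_r=2$. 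Without these exclusions your argument does not arrive at the precise shape of \eqref{sit2'}.
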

\begin{proof}
"$\Rightarrow$" Let us suppose that $n$ is almost abelian. It follows that:
\begin{equation} \label{divisor}
\text{ For any divisor }d\text{ of }n: d|n,\text{ there is at most one nonabelian group of order }d.
\end{equation}
In addition, since $n$ is not abelian, there are two possibilities:
\begin{itemize}
\item
$\text{there exists } r\in\{1,\dots,k\}$ such that $n_r\geq 3$.
This contradicts \eqref{divisor}, since there are at least two nonabelian groups of order $p_r^{n_r}$ when $n_r\geq 3$. 
\item
\begin{equation}\label{ij}
\text{there exist } i, j\in\{1,\dots,k\} \text{ such that }p_i|p_j^{n_j}-1
\end{equation}
and 
\begin{equation} n_r\leq 2, \text{for all } r\in\{1,..,k\}
\end{equation}
Let us analyse which of the exponents can be $2$.
From \eqref{ij}, it follows that there exists a nontrivial semi-direct product $C_{p_j}^{n_j}\rtimes C_{p_i}$. 
\begin{itemize}
\item
If $n_r=2$, for $r\in\{1,2,\dots,k\}\setminus\{i,j\}$, it follows that there are two nonabelian groups of order $p_ip_j^{n_j}p_r^2$:
\[ ((C_{p_j})^{n_j}\rtimes C_{p_i})\times (C_{p_r})^2 \text{ and }((C_{p_j})^{n_j} \rtimes C_{p_i})\times C_{p_r^2}\]
which gives us a contradiction.
\item 
If $n_i=2$ then we will have two nonabelian groups of order $p_i^2p_j^{n_j}$:
\[(C_{p_j})^{n_j} \rtimes C_{p_i^2} \text{ and } ((C_{p_j})^{n_j}\rtimes C_{p_i})\times C_{p_i},\]
which again gives a contradiction. 
\end{itemize}
It follows that $n_r=1, \text{for all } r\neq j$.\\
If in \eqref{ij} there are two distinct pairs $(i,j)$ and $(\alpha,\beta)$ such that $p_{i}|p_j^{n_j}-1$ and $p_{\alpha}|p_{\beta}^{n_\beta}-1,$ then it follows that there are two distinct nonabelian groups of order $p_i p_j^{n_j}p_{\alpha}p_{\beta}^{n_{\beta}}$:
\[((C_{p_{\beta}})^{n_{\beta}}\rtimes C_{p_{\alpha}})\times C_{p_{i}p_{j}^{n_{j}}} \text{ and } C_{p_{\alpha} p_{\beta}^{n_{\beta}}}\times ((C_{p_{j}})^{n_j}\rtimes C_{p_i})\]
 which proves that there is at most one pair .\\
If $n_j=1$, \eqref{sit1} is satisfied.\\
If $n_j=2$, let us assume that 
\begin{equation}
p_i|p_j-1. 
\end{equation}
It follows that there are two distinct nonabelian groups of order $p_ip_j^2$:
\[ (C_{p_j})^2\rtimes C_{p_i} \text{ and } C_{{p_j}^2}\rtimes C_{p_i}\]
which contradicts our hypothesis. It follows that $p_i\nmid p_j-1\xRightarrow{p_i|p_j^2-1} p_i|p_j+1$ which gives \eqref{sit2'}.
\end{itemize}
"$\Leftarrow$" We will prove by \textbf{induction over $n$ }that:
\begin{multline}\label{induction}
\text{ If }n\text{ has at least two non-prime factors and satisfies either \eqref{sit1} or \eqref{sit2'}},\\
\text{it follows that there is a unique nonabelian group of order }n.
\end{multline} 
The \textbf{base case} is $n=6$. There is just one nonabelian group of order $6$: the symmetric group $S_3$.\\
Let us proceed to the \textbf{inductive step}. Assume that \eqref{induction} holds for any positive integer with at least two factors $n'<n$. Let $G$ be a nonabelian group of order $n$. We can assume that $p_1<p_2<\dots p_k$. It follows that $j\geq 2$. Indeed, if $j=1$, we have two possibilities:
\begin{itemize}
\item
$p_i|p_1-1$ which implies $p_i<p_1$, a contradiction.
\item
$p_i|p_1+1\xRightarrow {p_i>p_1} p_i=p_1+1\Rightarrow p_1=2$ and $p_i=p_2=3\Rightarrow p_j|p_i-1$ which is a contradiction.
\end{itemize}
Thus $j\geq 2$ and therefore $n_1=1$, hence the $p_1$-Sylow subgroups of $G$ are cyclic of order $p_1$. Thus, by the Burnside normal p-complement theorem, $G$ has a $p_1$-normal complement, i.e. $\text{there exists } H\triangleleft G$ with $|H|=n/p_1$ such that $G=H\rtimes C_{p_1}$. We identify two cases:
\begin{itemize}
\item 
$n$ satisfies \eqref{sit1}.
\begin{itemize}
\item
If $i=1$, it follows that $H$ is cyclic and we get:\\
$G=C_{\frac{n}{p_1}}\rtimes C_{p_1}\cong (C_{p_j}\rtimes C_{p_1})\times C_{\frac{n}{p_1p_j}}$
\item
If $i\geq 2$, there are again two possibilities:
\begin{itemize}
\item
$H$ abelian $\Rightarrow H$ cyclic which is analogous with above.
\item
$H$ nonabelian $\Rightarrow H\cong (C_{p_j}\rtimes C_{p_i})\times C_{\frac{n}{p_1p_ip_j}}\Rightarrow$\\$\begin{aligned}G&\cong ((C_{p_j}\rtimes C_{p_i})\times C_{\frac{n}{p_1p_ip_j}})\rtimes C_{p_1}\\&\cong ((C_{p_j}\rtimes C_{p_i})\rtimes C_{p_1})\times C_{\frac{n}{p_1p_ip_j}}\end{aligned}$.\\
Since $|\Aut(C_{p_j}\rtimes C_{p_i})|=p_j(p_j-1)$ and $p_1\nmid p_j(p_j-1)$ the semidirect product $(C_{p_j}\rtimes C_{p_i})\rtimes C_{p_1}$ is trivial therefore
$$\begin{aligned}G&\cong ((C_{p_j}\rtimes C_{p_i})\times C_{p_1})\times C_{\frac{n}{p_1p_ip_j}}\\&\cong (C_{p_j}\rtimes C_{p_i})\times C_{\frac{n}{p_ip_j}}.\end{aligned}$$
It follows that in this case there is a single nonabelian group of order $n$ which is:
$(C_{p_j}\rtimes C_{p_i})\times C_{\frac{n}{p_ip_j}}$.
\end{itemize} 
\end{itemize}
\item
$n$ checks \eqref{sit2'}.
\begin{itemize}
\item
If $i=1$, then $H$ is abelian and we get:
We get two possibilities:
\begin{itemize}
\item
$H\cong C_{p_j^2}\times C_{\frac{n}{p_1p_j^2}}$. It follows that $G\cong (C_{p_j^2}\rtimes C_{p_1})\times C_{\frac{n}{p_1p_j^2}}$.\\
Since $p_1\nmid |\Aut((C_{p_j^2})|=p_j(p_j-1)$, it follows that $C_{p_j^2}\rtimes C_{p_1}=C_{p_j^2}\times C_{p_1}$, therefore $G\cong C_n$ which is a contradiction.
\item
$H\cong (C_{p_j})^2\times C_{\frac{n}{p_1p_j^2}}$ thus $G\cong ((C_{p_j})^2\rtimes C_{p_1})\times C_{\frac{n}{p_1p_j^2}}$ which is nonabelian. 
\end{itemize} 
\item
If $i\geq 2$, then we again have two cases:
\begin{itemize}
\item
$H\cong ((C_{p_j})^2\rtimes C_{p_i})\times C_{\frac{n}{p_1p_ip_j^2}}$, then:
$$G\cong (((C_{p_j})^2\rtimes C_{p_i})\rtimes C_{p_1})\times C_{\frac{n}{p_1p_ip_j^2}}.$$
From \cite{Automorphisms}, it follows that $|\Aut((C_{p_j})^2\rtimes C_{p_i}|=2(p_j^2-1)p_j^2\nDiv p_1,$ therefore:
$$\begin{aligned}
G&\cong (((C_{p_j})^2\rtimes C_{p_i})\times C_{p_1})\times C_{\frac{n}{p_1p_ip_j^2}}\\ &\cong ((C_{p_j})^2\rtimes C_{p_i})\times C_{\frac{n}{p_ip_j^2}}.\end{aligned}.$$ 
\item $H$ abelian. We identify two possibilities:
\begin{itemize}
\item
$H\cong C_{p_2}\times\dots\times C_{p_j}\times \dots\times C_{p_k}\cong C_{\frac{n}{p_1}}\Rightarrow G$ cyclic which is false.
\item
$H\cong C_{p_2}\times \dots C_{p_j}^2\times \dots \times C_{p_k}\cong (C_{p_j})^2\times C_{\frac{n}{p_1p_j^2}}$\\
Let us observe that 
\begin{equation}\label{eq}
p_1\nmid |\Aut((C_{p_j})^2 \times C_{\frac{n}{p_1p_j^2}})|=(p_j^2-1)(p_j^2-p_j)\cdot \varphi\left(\frac{n}{p_1p_j^2}\right).
\end{equation}
$G\cong ((C_{p_j})^2\times C_{\frac{n}{p_1p_j^2}})\rtimes C_{p_1}\cong((C_{p_j})^2\times C_{\frac{n}{p_1p_j^2}})\times C_{p_1}\stackrel{\eqref{eq}}{\cong} (C_{p_j})^2\times C_{\frac{n}{p_j^2}}$.\\ This means $G$ is abelian which is false.
\end{itemize}
\end{itemize}
Therefore, also in this case there is only one nonabelian group of order $n$.
\end{itemize}
\end{itemize}
\end{proof}
\begin{cor}
Let $n=p_1^{n_1}\cdot\dots\cdot p_k^{n_k}$ where $2=p_1<\dots<p_k$. It follows that $n$ is almost abelian if and only if $k=2$ and $n_1=n_2=1$.
\end{cor}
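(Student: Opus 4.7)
The plan is to specialize Theorem~\ref{theorem} to the case $p_1 = 2$, exploiting the elementary fact that $2 \mid p_l - 1$ for every index $l \ge 2$, since each such $p_l$ is odd.

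For the direction $(\Leftarrow)$, I would verify that $n = 2p_2$ satisfies condition~\eqref{sit1}. The pair $(i,j) = (1,2)$ works because $p_1 = 2 \mid p_2 - 1$, and it is the only ordered pair in $\{1,2\}^2$ with $p_i \mid p_j - 1$: the pair $(2,1)$ would force $p_2 \mid 1$, and the diagonal pairs are trivially excluded. Theorem~\ref{theorem} then yields almost abelianity.

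For the direction $(\Rightarrow)$, suppose $n$ is almost abelian, so by Theorem~\ref{theorem} either \eqref{sit1} or \eqref{sit2'} holds. In case~\eqref{sit1}, every ordered pair $(1,l)$ with $l \in \{2,\dots,k\}$ witnesses $p_1 \mid p_l - 1$, so the uniqueness clause forces $\{2,\dots,k\}$ to be a singleton, i.e.\ $k = 2$. The exponents-are-one clause then gives $n_1 = n_2 = 1$.

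The main obstacle is to rule out case~\eqref{sit2'}. Inspecting the forward direction of the proof of Theorem~\ref{theorem}, one sees that condition~\eqref{sit2'} carries the additional implicit constraint $p_i \nmid p_j - 1$: otherwise $(C_{p_j})^2 \rtimes C_{p_i}$ and $C_{p_j^2} \rtimes C_{p_i}$ would be two distinct nonabelian groups of order $p_i p_j^2$, and (as the reverse direction of that proof also implicitly uses this when deducing $p_1 \nmid |\Aut(C_{p_j^2})|$) the theorem would otherwise fail. Applied with $p_i = p_1 = 2$, this constraint is violated by $2 \mid p_j - 1$, forcing $i \ge 2$. But then, for any $l \in \{2,\dots,k\}\setminus\{j\}$, the pair $(1,l)$ differs from $(i,j)$, so the clause ``$p_\alpha \nmid p_\beta - 1$ for all $(\alpha,\beta) \ne (i,j)$'' is violated by $2 \mid p_l - 1$. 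Hence $\{2,\dots,k\}\setminus\{j\}$ is empty, forcing $k = 2$ and $j = 2$; yet then the only candidate for $i \ne j$ is $i = 1$, contradicting $i \ge 2$. So \eqref{sit2'} cannot occur when $p_1 = 2$, and the corollary follows. As a cross-check, one can alternatively exhibit at least two distinct nonabelian groups of order $2p_2^2$---for instance $D_{2p_2^2}$ and $C_{p_2} \times D_{2p_2}$---to confirm that this potential case is never almost abelian.
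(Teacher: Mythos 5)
Your proof is correct, and since the paper states this corollary without any proof there is nothing to compare it against directly; the derivation you give (specializing Theorem~\ref{theorem} to $p_1=2$ and exploiting $2\mid p_l-1$ for every odd prime $p_l$) is surely the intended one. The genuinely valuable part of your argument is the treatment of case \eqref{sit2'}: as literally written, \eqref{sit2'} \emph{is} satisfied by $n=2p^2$ for every odd prime $p$ (take $j=2$, $i=1$, and note that the final clause only constrains pairs other than $(i,j)$), and such $n$ is not almost abelian --- for instance $n=18$ admits the three nonabelian groups $D_{18}$, $C_3\times S_3$ and $(C_3\times C_3)\rtimes C_2$. So the corollary is actually inconsistent with the literal statement of Theorem~\ref{theorem}, and your observation that \eqref{sit2'} must carry the implicit constraint $p_i\nmid p_j-1$ (which is exactly what the forward direction of the theorem's proof establishes, and what its reverse direction uses when asserting $p_1\nmid|\Aut(C_{p_j^2})|$) is the step that makes the corollary true; your cross-check via $D_{2p_2^2}$ and $C_{p_2}\times D_{2p_2}$ confirms this independently. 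One small point of polish: when you write that ``$2\mid p_j-1$ forces $i\ge 2$'' you are tacitly assuming $j\ge 2$, i.e.\ that $p_j$ is odd; the case $j=1$ is harmless, since then $i\ge 2$ holds automatically and the emptiness of $\{2,\dots,k\}\setminus\{j\}$ would force $k=1$, contradicting the required existence of $i\in\{1,\dots,k\}\setminus\{j\}$, but it deserves a sentence.
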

\begin{cor}
Let $n=p_1\cdot\dots\cdot p_k$, where $p_1<\dots<p_k$. Then $n$ is almost abelian if and only if there is a unique pair $(i,j)\in\{1,\dots,k\}^2$ such that $p_i|p_j-1$.
\end{cor}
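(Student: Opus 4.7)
The plan is to derive this corollary directly from Theorem \ref{theorem} by specializing to the squarefree case. Since $n=p_1\cdots p_k$ has $n_l=1$ for every $l$, the condition \eqref{sit2'} of Theorem \ref{theorem} is vacuously unattainable, because \eqref{sit2'} demands the existence of an index $j$ with $n_j=2$. Consequently, $n$ can be almost abelian only via \eqref{sit1}, and \eqref{sit1} under the squarefree hypothesis is equivalent to the statement about the unique pair $(i,j)$.

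For the forward direction, I would assume $n$ almost abelian, invoke Theorem \ref{theorem}, observe that \eqref{sit2'} fails trivially under squarefreeness, and then read off from \eqref{sit1} the existence and uniqueness of the pair $(i,j)$ with $p_i\mid p_j-1$. For the backward direction, I would note that the hypothesis already ensures $n_l=1$ for all $l$ and supplies the unique pair required by \eqref{sit1}, so Theorem \ref{theorem} immediately implies $n$ is almost abelian.

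One small thing worth checking, though not really an obstacle, is that whenever such a pair $(i,j)$ exists we automatically have $i\neq j$ (since $p_j\nmid p_j-1$ for any prime $p_j\geq 2$) and hence $k\geq 2$, so the almost abelian condition is not vacuous. Beyond this, the argument is essentially a direct reading of Theorem \ref{theorem}, so there is no genuine obstacle; the work has already been done in the main theorem, and the corollary is a clean specialization.
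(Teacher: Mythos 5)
Your proposal is correct and matches the paper's (implicit) reasoning: the corollary is stated as an immediate specialization of Theorem \ref{theorem} to squarefree $n$, where \eqref{sit2'} is impossible and \eqref{sit1} reduces to the unique-pair condition. Your added observation that $i\neq j$ forces $k\geq 2$ is a sensible sanity check and does not change the argument.
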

\begin{rem}
If $n$ is almost cyclic, $n$ is either abelian or almost abelian.
The converse is false. For example, $75$ is almost abelian, but $75$ is not almost cyclic/cyclic.
\end{rem}
\begin{thm}
A number $n=p_1^{n_1}\cdot\dots\cdot p_k^{n_k}$ is almost nilpotent if and only if $n$ is almost abelian, i.e. $k\geq 2$ and $n$ checks \eqref{sit1} or \eqref{sit2'}.  
\end{thm}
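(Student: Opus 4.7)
By Theorem~\ref{theorem}, almost abelian is equivalent to $k\ge 2$ together with \eqref{sit1} or \eqref{sit2'}, so it suffices to prove that $n$ is almost nilpotent if and only if $n$ is almost abelian. The forward direction, almost abelian $\Rightarrow$ almost nilpotent, is the easy one: abelian groups are nilpotent, so if $n$ is almost abelian with unique nonabelian group $G_0$, every other group of order $n$ is nilpotent. The "$\Leftarrow$" part of the proof of Theorem~\ref{theorem} exhibits $G_0$ as $(C_{p_j}^{n_j}\rtimes C_{p_i})\times C_{n/(p_i p_j^{n_j})}$ with a nontrivial semidirect factor; since $p_i\ne p_j$, the Sylow $p_i$-subgroup of $G_0$ is not normal, so $G_0$ is not the internal direct product of its Sylow subgroups and hence is non-nilpotent. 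Thus $G_0$ is the unique non-nilpotent group of order $n$.

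For the converse, let $n$ be almost nilpotent with unique non-nilpotent group $G_0$. Then $n$ cannot be a prime power, since every group of prime-power order is nilpotent; similarly $n$ itself is not a nilpotent number. The heart of the argument is to show that $n_r\le 2$ for every $r$: once this is established, every Sylow subgroup of a nilpotent group of order $n$ has order $p_r$ or $p_r^2$ and is therefore abelian, so every nilpotent group of order $n$ is abelian. Consequently $G_0$ is the unique nonabelian group of order $n$, so $n$ is almost abelian, and Theorem~\ref{theorem} yields the stated characterization via \eqref{sit1} or \eqref{sit2'}.

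To prove $n_r\le 2$, I would argue by contradiction: suppose $n_r\ge 3$ and produce two non-isomorphic non-nilpotent groups of order $n$, contradicting uniqueness of $G_0$. The failure of nilpotent factorization for $n$ gives indices $s,t$ and $l\in\{1,\dots,n_t\}$ with $p_s\mid p_t^l-1$, hence a nontrivial semidirect product $(C_{p_t})^l\rtimes C_{p_s}$. When $p_r\notin\{p_s,p_t\}$, the $p_r$-Sylow enters as a direct factor of the natural construction $\bigl((C_{p_t})^l\rtimes C_{p_s}\bigr)\times X$, and two different isomorphism types of $p_r$-groups of order $p_r^{n_r}$ (for instance $C_{p_r^{n_r}}$ versus $(C_{p_r})^{n_r}$, which are non-isomorphic for $n_r\ge 3$) yield two non-isomorphic non-nilpotent groups. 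When $p_r\in\{p_s,p_t\}$ the $p_r$-Sylow of such a construction splits as the piece supporting the action times a complement of order $p_r^{n_r-1}$ or $p_r^{n_r-l}$, and for $n_r\ge 3$ this complement again admits multiple isomorphism types, producing a second non-nilpotent group.

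The main obstacle is the case $p_r\in\{p_s,p_t\}$: producing two non-isomorphic non-nilpotent groups requires careful control over how the nonabelian structure of the $p_r$-Sylow interacts with the semidirect action, and in boundary subcases (such as $r=t$ with $l=n_t$, where the complement is trivial) the construction must instead exploit divisibility properties of $|\Aut(P)|$ for various $p_r$-groups $P$ of order $p_r^{n_r}$, or alternative nontrivial actions, to produce the required second non-nilpotent group.
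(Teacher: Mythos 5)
Your overall strategy is genuinely different from the paper's and, in outline, quite elegant: rather than re-deriving \eqref{sit1}/\eqref{sit2'} directly for almost nilpotent numbers (which is what the paper does), you reduce to Theorem~\ref{theorem} by showing that once $n_r\le 2$ for all $r$, every nilpotent group of order $n$ is a direct product of abelian Sylow subgroups, so the unique non-nilpotent group is also the unique nonabelian one. The forward direction is fine. The problem is the key claim $n_r\le 2$, and it sits exactly in the case you flag as the main obstacle: $p_r=p_t$ with $l=n_t$, i.e.\ when the multiplicative order of $p_t$ modulo $p_s$ equals $n_t\ge 3$. In that situation every nontrivial $\mathbb{F}_{p_t}[C_{p_s}]$-module of dimension at most $n_t$ is irreducible of dimension exactly $n_t$, so all nontrivial actions of $C_{p_s}$ on $(C_{p_t})^{n_t}$ give a single isomorphism class, there is no complement whose isomorphism type can be varied, and $p_s$ need not divide $|\Aut(P)|$ for any other group $P$ of order $p_t^{n_t}$ (for nonabelian $P$ of order $p_t^3$ the primes dividing $|\Aut(P)|$ divide $p_t\cdot|GL(2,p_t)|$, since the kernel of $\Aut(P)\to\Aut(P/\Phi(P))$ is a $p_t$-group). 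So the second non-nilpotent group you need simply may not exist.

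Concretely, take $n=7^3\cdot 19$. Here $19\mid 7^3-1$, but $19\nmid 7-1$, $19\nmid 7^2-1$ and $7\nmid 19-1$. Every group of this order has a normal Sylow $7$-subgroup; among the five groups $P$ of order $343$, only $P=(C_7)^3$ satisfies $19\mid|\Aut(P)|$, and the resulting action is irreducible and unique up to isomorphism. Hence there are exactly six groups of order $n$, exactly one of which, namely $(C_7)^3\rtimes C_{19}$, is non-nilpotent, while three of them (the two extraspecial groups of order $343$ times $C_{19}$, and the non-nilpotent one) are nonabelian. So $n$ is almost nilpotent but not almost abelian, and it satisfies neither \eqref{sit1} nor \eqref{sit2'}. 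The gap in your argument therefore cannot be repaired: the implication you are trying to prove is false as stated. For what it is worth, the paper's own proof founders at the very same spot --- it asserts without any construction that $n_j\ge 3$ would yield two non-isomorphic non-nilpotent groups of order $p_j^{n_j}p_i$, and the example above refutes that assertion --- so your instinct about where the difficulty lies was exactly right; the honest conclusion is that this case is a counterexample rather than a missing lemma.
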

\begin{proof}
"$\Leftarrow$" The converse follows from Theorem \ref{theorem} since all the groups constructed in the proof are non-nilpotent.\\
"$\Rightarrow$"Let us assume $n$ is almost nilpotent. It follows that $\text{for all } d|n, \text{there exists } $ at most one non-nilpotent group of order $d$. Since $n$ is non-nilpotent, it follows that there is $(i,j)\in\{1,\dots,k\}^2$ and $1\leq d_j\leq n_j$ such that $p_i|p_j^{d_j}-1$. It follows that $\alpha_j=n_j$. Otherwise there would be two non-nilpotent non-isomorphic groups of order $p_j^{n_j}p_i$:$$((C_{p_j})^{\alpha_j}\rtimes C_{p_i})\times C_{p_j^{n_j-d_j}} \text{ and }(C_{p_j})^{n_j}\rtimes C_{p_i}.$$
Furthermore, the pair $(i,j)$ is unique. Otherwise, if there were two pairs $(i',j')\neq (i,j)$ such that $p_{i'}|p_{j'}^{n_j}-1$, again there would be two non-nilpotent non-isomorphic groups of order $n$:
$$((C_{p_j})^{n_j}\rtimes C_{p_i})\times C_{\frac{n}{p_ip_j^{n_j}}} \text{ and }((C_{p_{j'}})^{n_{j'}}\rtimes C_{p_{i'}})\times (C_{\frac{n}{p_{i'}p_{j'})^{n_{j'}}}}.$$
Let us observe that $n_r=1, \text{for all } r\neq i, j$. Indeed, otherwise there would exist at least two distinct groups $P_r$ and $Q_r$ of order $p_r^{n_r}$, which would give two non-nilpotent, non-isomorphic groups of order $p_j^{n_j}p_ip_r^{n_r}$:
$$((C_{p_j})^{n_j}\rtimes C_{p_i})\times P_r \text{ and }((C_{p_j})^{n_j}\rtimes C_{p_i})\times Q_r.$$
Analogously, we can show that $n_i=1$.\\
If $n_j=1$, \eqref{sit1} holds.\\
If $n_j\geq 2$, then $n_j=2$, since otherwise we would two non-nilpotent, non-isomorphic groups of order $p_j^{n_j}p_i$. Thus 
\begin{equation}\label{pi}
p_i|p_j^2-1.
\end{equation} In addition, if $p_1|p_j-1$, there are two non-nilpotent, non-isomorphic groups of order $p_j^{2}p_i$:
$$(C_{p_j})^2\rtimes C_{p_i}\text{ and }C_{p_j^2}\rtimes C_{p_i}.$$
It follows that $p_i\nmid p_j-1\xRightarrow{\eqref{pi}} p_i|p_j+1$, therefore we get \eqref{sit2'}, which concludes the proof.
\end{proof}

% alteratively, bibliographies prepared with BibTeX can be included by
% means of the following commands
%\bibliographystyle{srtnumbered}
%\bibliography{mybib}

\section*{Funding}
The authors did not receive support from any organization for the submitted work.

\section*{Conflicts}
The authors declare that they have no conflict of interest.

\section*{Acknowledgements} The authors are grateful to the reviewers for their remarks which improved the previous version of the paper.

\end{document}